\numberwithin{equation}{section}
\numberwithin{figure}{section}
\theoremstyle{plain}
\newtheorem{thm}{\protect\theoremname}
  \theoremstyle{remark}
  \newtheorem{claim}[thm]{\protect\claimname}
  \theoremstyle{plain}
  \newtheorem{cor}[thm]{\protect\corollaryname}
  \theoremstyle{plain}
  \newtheorem{prop}[thm]{\protect\propositionname}
  \theoremstyle{remark}
  \newtheorem{rem}[thm]{\protect\remarkname}
  \theoremstyle{plain}
  \newtheorem{lem}[thm]{\protect\lemmaname}
  \providecommand{\claimname}{Claim}
  \providecommand{\corollaryname}{Corollary}
  \providecommand{\lemmaname}{Lemma}
  \providecommand{\propositionname}{Proposition}
  \providecommand{\remarkname}{Remark}
\providecommand{\theoremname}{Theorem}
\begin{document}

\title{UPPER BOUND ON THE MINIMAL NUMBER OF RAMIFIED PRIMES FOR ODD ORDER
SOLVABLE GROUPS }

\author{RABAYEV DANIEL}
\maketitle
\begin{abstract}
{\normalsize{}Let $G$ be a finite group and let $ram^{t}(G)$ denote
the minimal positive integer $n$ such that $G$ can be realized as
the Galois group of a tamely ramified extension of $\mathbb{Q}$ ramified
only at $n$ finite primes. Let $d(G)$ denote the minimal non negative
integer for which there exists a subset $X$ of $G$ with $d(G)$
elements such that the normal subgroup of $G$ generated by $X$ is
all of $G$. It is known that $d(G)\leq ram^{t}(G)$. However, it
is unknown whether or not every finite group $G$ can be realized
as a Galois group of a tamely ramified extension of $\mathbb{Q}$
with exactly $d(G)$ ramified primes. We will show that $3\cdot log(|G|)$
is an upper bound for $ram^{t}(G)$ for all odd order solvable group
$G$.}{\normalsize \par}

\newcommand\blfootnote[1]{%
\begingroup   
\renewcommand\thefootnote{}\footnote{#1}%
\addtocounter{footnote}{-1}%
\endgroup 
}

\blfootnote{2010 Mathematics Subject Classification. Primary 11R32; 12F12, Secondary 20D10.}
\blfootnote{Key words and phrases. Galois group, solvable group, ramified primes.}
\end{abstract}

\section{{\large{}Introduction}}

The inverse Galois problem states that for every group $G$, there
should exist a Galois extension $K/\mathbb{Q}$ with a Galois group
which is isomorphic to $G$. It is evident that odd order groups are
solvable groups, and we know \cite{key-12} that every finite odd
(solvable) group is realizable over $\mathbb{Q}$. A variant of the
inverse Galois problem is the minimal ramification problem, which
we will now discuss.

Let $ram^{t}(G)$ denote the minimal positive integer $n$ such that
$G$ can be realized as the Galois group of a tamely ramified extension
of $\mathbb{Q}$ ramified only at $n$ finite primes, and let $d(G)$
denote the minimal nonnegative integer for which there exists a subset
$X$ of $G$ with $d(G)$ elements such that the normal subgroup of
$G$ generated by $X$ is all of $G$. 

Let $K/\mathbb{Q}$ be a finite Galois extension with Galois group
$G=\textrm{Gal}(K/\mathbb{Q})$, where $G$ is a finite group. Let
$p$ be a finite prime of $\mathbb{Q}$. If $p$ ramifies in $K$
and if $\mathfrak{p}$ is a prime of $K$ dividing $p$, then the
inertia group $I(\mathfrak{p}\mid p)$ is a nontrivial subgroup of
$G$. If $I$ is the subgroup of $G$ generated by all $I(\mathfrak{p}\mid p)$,
then the fixed field of $I$ is an unramified extension of $\mathbb{Q}$.
Since by Minkowski\textquoteright s theorem, there are no nontrivial
unramified extensions of $\mathbb{Q}$, we must have that $I=G$.
Suppose in addition that $K/\mathbb{Q}$ is tamely ramified, i.e.
for every prime $p$ which ramifies in $K$, all of the inertia groups
are cyclic and of order prime to $p$. In this case, let us denote
$I(\mathfrak{p}\mid p)=<g_{p}>$ and we deduce that the normal subgroup
of $G$ generated by all of the $g_{p}$ is $G$. We conclude that
$d(G)\leq ram^{t}(G)$. 

Whether or not every finite group can be realized as a Galois group
of a tamely ramified extension of $\mathbb{Q}$ with exactly $d(G)$
ramified primes is an open question. The (tame) minimal ramification
problem is the following: can every finite group $G$ be realized
as a Galois group of a tamely ramified extension of $\mathbb{Q}$
with exactly $d(G)$ ramified primes. In \cite{key-8}, Boston and
Markin conjectured that the minimal ramification problem has a positive
answer, namely that for every finite group $G$ we have $d(G)=ram^{t}(G)$.
Most of the known results are valid only for $l$-groups, where $l$
is an odd prime. Moreover, most of these results give an upper bound
on $ram^{t}(G)$. An important property of the bound is that one should
be able to calculate it directly from the structure of $G$ as an
abstract group, namely the bound should not depend on the realization
of the group as a Galois group. Serre \cite{key-4} noted, using the
Scholz-Reichardt method, that for a finite $l$-group $G$, $|G|=l^{n}$,
$l$ being an odd prime: $ram^{t}(G)\leq n$. However, $n=d(G)$ only
if $G$ is elementary abelian. Geyer and Jarden \cite{key-6} generalized
Serre's result for global fields. Namely, they showed that for a global
field $K$ with $l\neq char(K)$ and $\mu_{l}\nsubseteq K$ (note
that $l\neq2$), there exists $r=r(K)$ such that for every $l$-group
$G$ of order $l^{n}$: $ram^{t}(G)\leq n+r$. In particular, for
$K=\mathbb{Q}$ we have $r=0$.\textbf{ }Plans \cite{key-5} sharpened
Serre's upper bound over $\mathbb{Q}$ by showing that the Scholz-Reichardt
method yields the following bound: $d(G)+\sum_{1\leq i\leq n-2}d(G_{i})$,
where $G_{i}=C_{i}/C_{i+1}$, $C_{i}$ being the descending central
series of $G$, and for $n\leq2$ the sum equals zero. We deduce that
the minimal ramification problem has a positive answer for all $l$-groups
$G$ of nilpotency class 2, where $l$ is an odd prime. Kisilevsky,
Neftin and Sonn \cite{key-9} proved that the minimal ramification
problem has a positive answer for the family of semiabelian\footnote{See \cite{key-17}.}
nilpotent groups. However, this family does not contain all finite
nilpotent groups, for example it is shown in \cite{key-2} that there
are 10 groups of order $64$ which are not semiabelian. 

We will give an upper bound on the number of ramified primes for all
finite odd order groups. In particular, we will prove the following
theorem:

\textbf{\textit{Theorem \ref{thm:main.neukirch}.}}\textit{ Let $G$
be an odd order group, then:}
\begin{eqnarray*}
 & ram^{t}(G)\leq3\cdot ln(|G|)\\
\end{eqnarray*}

This upper bound is extracted from Neukirch's proof of the realization
of odd order groups over number fields. Similar to previous results,
the upper bound is in a form of a sum of ranks of a certain derived
series (in this case it is the chief series) of the group. 

\section{{\large{}Cohomology and Ramification in number fields.}}

Let $K$ be a number field and denote $G_{K}$ the absolute Galois
group of $K$ and let $A$ be a $G_{K}$-module. As usual, $\textrm{H}^{q}(K,A)$
is the cohomology group $\textrm{H}^{q}(G_{K},A)$. Let $K_{nr}$
be the maximal unramified extension of $K$. 

We recall that the unramified cohomology group is defined to be the
image of the following: 
\begin{eqnarray*}
 & \textrm{H}^{q}(\textrm{Gal}(K_{\mathfrak{p},nr}/K_{\mathfrak{p}}),A)\overset{inf}{\longrightarrow}\textrm{H}^{q}(K_{\mathfrak{p}},A)
\end{eqnarray*}
where $\mathfrak{p}$ is a prime of $K$. We denote $\textrm{H}_{nr}^{q}(K_{\mathfrak{p}},A)=im(inf)$.
Consider the homomorphism:
\begin{eqnarray*}
 & \textrm{H}^{q}(K,A)\rightarrow\underset{\mathfrak{p}}{\prod}^{'}\textrm{H}^{q}(K_{\mathfrak{p}},A)
\end{eqnarray*}
 where the restricted product is taken with respect to $\textrm{H}_{nr}^{q}(K_{\mathfrak{p}},A)$.
We say that $x\in\textrm{H}^{q}(K,A)$ is unramified at $\mathfrak{p}$
if $x_{\mathfrak{p}}\in\textrm{H}_{nr}^{q}(K_{\mathfrak{p}},A)$,
otherwise we say that $x_{\mathfrak{p}}$ is ramified.

Let $p$ be an odd prime number and assume that char($K$)$\neq p$.
Let $\mu_{p}$ be the group of $p$-th roots of unity, assume that
$\mu_{p}\nsubseteq K$, denote $\mathfrak{K}=K(\mu_{p})$ and let
$\Delta=\textrm{Gal}(\mathfrak{K}/K).$ Let $A$ be a finite $\Delta$-module
and a trivial $G_{\mathfrak{K}}$ module. An action of $\delta\in\Delta$
on the group $\textrm{H}^{1}(\mathfrak{K},A)$ is then defined, namely:
\[
(\delta\cdot\varphi)(\tau)=\delta(\varphi(\tau^{\delta}))=\delta(\varphi(\hat{\delta}^{-1}\tau\hat{\delta}))
\]
where $\tau\in G_{\mathfrak{K}},\ \varphi\in\textrm{H}^{1}(\mathfrak{K},A)$,
and $\hat{\delta}\in G_{K}$ is a lifting of $\delta$. A canonical
character $\theta:\Delta\rightarrow(\mathbb{Z}/p)^{*}$ is defined
by $\delta(\zeta)=\zeta^{\theta(\delta)}$, where $\delta\in\Delta$
and $\zeta\in\mu_{p}$. However, by abuse of notation we will write
the action additively, namely, $\delta(\zeta)=\theta(\delta)\zeta$.
Define the following elements in $\mathbb{Z}/p[\Delta]$ by:
\begin{eqnarray*}
 & e_{i}=\frac{1}{|\Delta|}\sum_{\delta\in\Delta}\theta(\delta)^{-i}\delta
\end{eqnarray*}
where $i\in\mathbb{Z}$. A simple calculation shows that $e_{i}$
are idempotent elements. The action of $e_{1}$ on $\varphi\in\textrm{H}^{1}(\mathfrak{K},\mu_{p})$
is the following:
\begin{eqnarray*}
 & e_{1}\cdot(\varphi)(\tau)=\frac{1}{|\Delta|}\sum_{\delta\in\Delta}\theta(\delta)^{-1}(\delta\cdot\varphi)(\tau)=\frac{1}{|\Delta|}\sum_{\delta\in\Delta}\theta(\delta)^{-1}\delta(\varphi(\hat{\delta}^{-1}\tau\hat{\delta}))=\\
 & =\frac{1}{|\Delta|}\sum_{\delta\in\Delta}\theta(\delta)^{-1}\theta(\delta)(\varphi(\hat{\delta}^{-1}\tau\hat{\delta}))=\frac{1}{|\Delta|}\sum_{\delta\in\Delta}\varphi(\hat{\delta}^{-1}\tau\hat{\delta})
\end{eqnarray*}

A simple calculation shows that $\gamma\in\Delta$ acts on $e_{1}\cdot\varphi\in e_{1}\cdot\textrm{H}^{1}(\mathfrak{K},\mu_{p})$
as multiplication by $\theta(\gamma)$. A similar calculation yields
that $\gamma\in\Delta$ acts trivially on $e_{0}\cdot\textrm{H}^{1}(\mathfrak{K},\mathbb{Z}/p)$. 
\begin{claim}
\label{e0h1->e1h1}The following:
\begin{eqnarray*}
\psi^{*}:e_{0}\cdot\textrm{H}^{1}(\mathfrak{K},\mathbb{Z}/p)\otimes\mu_{p} & \rightarrow & e_{1}\cdot\textrm{H}^{1}(\mathfrak{K},\mu_{p})\\
e_{0}\varphi\otimes\zeta & \mapsto & e_{1}(\psi\varphi)
\end{eqnarray*}
is a $\Delta$-module isomorphism, where $\psi:\mathbb{Z}/p\rightarrow\mu_{p}$
is a fixed isomorphism.
\end{claim}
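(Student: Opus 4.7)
The plan is to exhibit $\psi^{*}$ as the restriction of a manifestly $\Delta$-equivariant isomorphism
\[
\Psi:\;\textrm{H}^{1}(\mathfrak{K},\mathbb{Z}/p)\otimes_{\mathbb{Z}/p}\mu_{p}\;\longrightarrow\;\textrm{H}^{1}(\mathfrak{K},\mu_{p})
\]
to the $\theta$-isotypic components on both sides. For $\zeta\in\mu_{p}$ let $\psi_{\zeta}:\mathbb{Z}/p\to\mu_{p}$ denote the $\mathbb{Z}/p$-linear map with $\psi_{\zeta}(1)=\zeta$ (so the fixed $\psi$ in the statement is $\psi_{\psi(1)}$), and set $\Psi(\varphi\otimes\zeta)=\psi_{\zeta}\circ\varphi$. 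Since $\mu_{p}$ is free of rank one over $\mathbb{Z}/p$, $\Psi$ is a well-defined $\mathbb{Z}/p$-bilinear map, and fixing any generator $\zeta_{0}$ of $\mu_{p}$ identifies $\Psi$ with the cohomology map induced by the isomorphism $\psi_{\zeta_{0}}$, so $\Psi$ is bijective. I would then verify that $\Psi$ is $\Delta$-equivariant for the diagonal action on the source by direct substitution into $(\delta\cdot\varphi)(\tau)=\delta(\varphi(\hat{\delta}^{-1}\tau\hat{\delta}))$, using that $\delta$ acts trivially on $\mathbb{Z}/p$ but by $\theta(\delta)$ on $\mu_{p}$.

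The key identity to prove is
\[
\psi_{\zeta}(e_{0}\varphi)\;=\;e_{1}(\psi_{\zeta}\varphi)\qquad\text{in }\textrm{H}^{1}(\mathfrak{K},\mu_{p}),
\]
obtained by expanding $e_{0}=\tfrac{1}{|\Delta|}\sum_{\delta}\delta$ (which averages $\mathbb{Z}/p$-cochains under conjugation of the argument, since $\Delta$ acts trivially on the coefficients) and $e_{1}=\tfrac{1}{|\Delta|}\sum_{\delta}\theta(\delta)^{-1}\delta$, and using that $\psi_{\zeta}$ is additive: both sides collapse to $\tfrac{1}{|\Delta|}\sum_{\delta}(\psi_{\zeta}\varphi)(\hat{\delta}^{-1}\tau\hat{\delta})$, because the factor $\theta(\delta)^{-1}$ in $e_{1}$ cancels the $\theta(\delta)$ coming from the action of $\delta$ on $\mu_{p}$-valued cochains. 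This shows $\Psi(e_{0}\varphi\otimes\zeta)=e_{1}(\psi_{\zeta}\varphi)$, so the map $\psi^{*}$ of the claim is exactly the restriction of $\Psi$ to $e_{0}\cdot\textrm{H}^{1}(\mathfrak{K},\mathbb{Z}/p)\otimes\mu_{p}$.

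Finally, $e_{0}\cdot\textrm{H}^{1}(\mathfrak{K},\mathbb{Z}/p)\otimes\mu_{p}$ is exactly the $\theta$-isotypic component of $\textrm{H}^{1}(\mathfrak{K},\mathbb{Z}/p)\otimes\mu_{p}$ (the trivial isotypic of $\textrm{H}^{1}(\mathfrak{K},\mathbb{Z}/p)$ tensored with $\mu_{p}$ is a module on which $\Delta$ acts by $\theta$), and $e_{1}\cdot\textrm{H}^{1}(\mathfrak{K},\mu_{p})$ is the $\theta$-isotypic component of $\textrm{H}^{1}(\mathfrak{K},\mu_{p})$. Since $\Psi$ is a $\Delta$-equivariant isomorphism of finite $\mathbb{Z}/p$-modules, its restriction to any isotypic component is an isomorphism onto the matching isotypic of the target, which yields the claim. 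The only real bookkeeping subtlety is keeping track of the three places where $\theta$ appears---in the definition of $e_{1}$, in the $\Delta$-action on $\mu_{p}$, and in identifying the $\theta$-isotypic of the tensor product---but the cancellation displayed in the key identity above is exactly what makes all three line up.
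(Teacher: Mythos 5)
Your proof is correct, and it takes a genuinely different route from the paper's. The paper verifies directly that the map $e_{0}\varphi\otimes\zeta\mapsto e_{1}(\psi\varphi)$ is well defined, injective, surjective and $\Delta$-equivariant, each by an explicit computation, with everything resting on the identity $e_{1}(\psi\varphi)=\psi(e_{0}\varphi)$. You instead build the global bilinear map $\Psi(\varphi\otimes\zeta)=\psi_{\zeta}\circ\varphi$ on all of $\textrm{H}^{1}(\mathfrak{K},\mathbb{Z}/p)\otimes\mu_{p}$, observe it is a $\Delta$-equivariant isomorphism, prove the same key identity $\psi_{\zeta}(e_{0}\varphi)=e_{1}(\psi_{\zeta}\varphi)$, and then invoke the isotypic decomposition (available since $|\Delta|$ divides $p-1$, so $\mathbb{Z}/p[\Delta]$ is semisimple and $\theta$ generates the character group) to conclude that the restriction to the $\theta$-isotypic components is an isomorphism. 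The two arguments share the same computational core, but your version buys something real: the paper's well-definedness check only compares $e_{0}\varphi_{1}\otimes\zeta$ with $e_{0}\varphi_{2}\otimes\zeta$ for the \emph{same} $\zeta$, which does not account for all relations in the tensor product (indeed, as literally written the formula $e_{0}\varphi\otimes\zeta\mapsto e_{1}(\psi\varphi)$ ignores $\zeta$ and is only consistent once one reads $\psi$ as $\psi_{\zeta}$, as you do). Your bilinearity argument closes that gap, and injectivity and surjectivity then come for free from the general fact that an equivariant isomorphism matches isotypic components. The one step you leave slightly compressed is the identification of $e_{0}\cdot\textrm{H}^{1}(\mathfrak{K},\mathbb{Z}/p)\otimes\mu_{p}$ with the full $\theta$-isotypic of the tensor product (you need that the other summands $e_{i}M\otimes\mu_{p}$, $i\neq0$, are $\theta^{i+1}$-isotypic with $\theta^{i+1}\neq\theta$ because $\theta$ is faithful), but this is routine and does not affect correctness.
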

\begin{proof}
Let us first notice the following: 
\begin{eqnarray*}
e_{1}(\psi\varphi) & = & \frac{1}{|\Delta|}\sum_{\delta\in\Delta}\psi\varphi(\hat{\delta}^{-1}\tau\hat{\delta})=\psi\left[\frac{1}{|\Delta|}\sum_{\delta\in\Delta}\varphi(\hat{\delta}^{-1}\tau\hat{\delta})\right]=\psi(e_{0}\varphi)\\
\end{eqnarray*}
where $\varphi\in\textrm{H}^{1}(\mathfrak{K},\mathbb{Z}/p)$. Now,
assume that $e_{0}\varphi_{1}\otimes\zeta=e_{0}\varphi_{2}\otimes\zeta$.
Thus $e_{0}\varphi_{1}=e_{0}\varphi_{2}$, and we have:
\begin{eqnarray*}
 & \psi^{*}(e_{0}\varphi_{1}\otimes\zeta)=e_{1}(\psi\varphi_{1})=\psi(e_{0}\varphi_{1})=\psi(e_{0}\varphi_{2})=e_{1}(\psi\varphi_{2})=\psi^{*}(e_{0}\varphi_{2}\otimes\zeta)\\
\end{eqnarray*}
Hence, $\psi^{*}$ is well defined. $\psi^{*}$ is clearly an homomorphism.
Let $e_{0}\varphi\otimes\zeta\in ker(\psi^{*})$, then:
\begin{eqnarray*}
 & 0=\psi^{*}(e_{0}\varphi\otimes\zeta)=e_{1}(\psi\varphi)=\psi(e_{0}\varphi)\\
\end{eqnarray*}
and the fact that $\psi$ is an isomorphism, implies that $e_{0}\varphi=0$,
and thus $ker(\psi^{*})=0$. The map $\psi^{*}$ is also onto since
for $e\eta\in e_{1}\cdot\textrm{H}^{1}(\mathfrak{K},\mu_{p})$, $\psi^{*}(\psi^{-1}\eta)=e_{1}\eta$.
Last, we need to show that $\psi^{*}$ respect the action of $\Delta$,
indeed:
\begin{eqnarray*}
 & \delta(\psi^{*}(e_{0}\varphi\otimes\zeta))=\delta(e_{1}(\psi\varphi))=\theta(\delta)e_{1}(\psi\varphi)=\theta(\delta)\psi^{*}(e_{0}\varphi\otimes\zeta)=\\
 & =\psi^{*}(e_{0}\varphi\otimes\theta(\delta)\zeta)=\psi^{*}(\delta(e_{0}\varphi)\otimes\delta(\zeta)))=\psi^{*}(\delta(e_{0}\varphi\otimes\zeta))
\end{eqnarray*}
\end{proof}
\begin{cor}
\label{cor kummer}$e_{1}\cdot\textrm{H}^{1}(\mathfrak{K},\mu_{p})$
is isomorphic as a $\Delta-$module to $\textrm{H}^{1}(\mathfrak{K},\mathbb{Z}/p)^{\Delta}\otimes\mu_{p}$.
\end{cor}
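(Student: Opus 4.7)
The plan is to deduce Corollary \ref{cor kummer} almost immediately from Claim \ref{e0h1->e1h1}. That claim already delivers a $\Delta$-module isomorphism
\[
e_{0}\cdot\textrm{H}^{1}(\mathfrak{K},\mathbb{Z}/p)\otimes\mu_{p}\ \cong\ e_{1}\cdot\textrm{H}^{1}(\mathfrak{K},\mu_{p}),
\]
so the only thing left to establish is the identification
\[
e_{0}\cdot\textrm{H}^{1}(\mathfrak{K},\mathbb{Z}/p)\ =\ \textrm{H}^{1}(\mathfrak{K},\mathbb{Z}/p)^{\Delta}.
\]

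First I would note that $|\Delta|$ is invertible in $\mathbb{Z}/p$. Indeed, the character $\theta:\Delta\hookrightarrow(\mathbb{Z}/p)^{*}$ is injective (as $\mathfrak{K}=K(\mu_{p})$), so $|\Delta|$ divides $p-1$ and $\frac{1}{|\Delta|}$ makes sense in $\mathbb{Z}/p[\Delta]$. Consequently $e_{0}=\frac{1}{|\Delta|}\sum_{\delta\in\Delta}\delta$ is the usual averaging idempotent.

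Next I would verify the standard linear-algebra fact that on any $\mathbb{Z}/p[\Delta]$-module $M$ the averaging idempotent projects onto the fixed submodule. For the inclusion $e_{0}M\subseteq M^{\Delta}$, I would use $\gamma e_{0}=e_{0}$ for every $\gamma\in\Delta$ (this was already observed in the excerpt for the specific case $M=\textrm{H}^{1}(\mathfrak{K},\mathbb{Z}/p)$, where $\gamma$ acts trivially on $e_{0}\cdot\textrm{H}^{1}(\mathfrak{K},\mathbb{Z}/p)$). For the reverse inclusion, if $m\in M^{\Delta}$ then $e_{0}m=\frac{1}{|\Delta|}\sum_{\delta}\delta m=\frac{1}{|\Delta|}\sum_{\delta}m=m$, so $m\in e_{0}M$.

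Applying this to $M=\textrm{H}^{1}(\mathfrak{K},\mathbb{Z}/p)$ and substituting into the isomorphism of Claim \ref{e0h1->e1h1} yields the corollary; the $\Delta$-actions match because on both sides the first tensor factor is fixed by $\Delta$ and the action on $\mu_{p}$ is through $\theta$. There is no real obstacle here — the substance was already absorbed in proving Claim \ref{e0h1->e1h1}, and the corollary is essentially a repackaging once the standard identity $e_{0}M=M^{\Delta}$ is recorded.
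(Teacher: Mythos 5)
Your proposal is correct and follows essentially the same route as the paper: both reduce the corollary to the identity $e_{0}\cdot\textrm{H}^{1}(\mathfrak{K},\mathbb{Z}/p)=\textrm{H}^{1}(\mathfrak{K},\mathbb{Z}/p)^{\Delta}$ and then invoke Claim \ref{e0h1->e1h1}. The only difference is cosmetic: the paper cites the general fact that $e_{i}A$ is the maximal submodule on which $\delta$ acts as $\theta(\delta)^{i}$, whereas you verify the $i=0$ case by the direct averaging computation $e_{0}m=m$ for $m\in M^{\Delta}$.
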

\begin{proof}
We already showed that every $\delta\in\Delta$ acts trivially on
$e_{0}\cdot\textrm{H}^{1}(\mathfrak{K},\mathbb{Z}/p)$, thus It is
clear that $e_{0}\cdot\textrm{H}^{1}(\mathfrak{K},\mathbb{Z}/p)$
is contained $\textrm{H}^{1}(\mathfrak{K},\mathbb{Z}/p)^{\Delta}$.
In general, for a $\mathbb{Z}/p[\Delta]-$module $A$, $e_{i}A$ is
the maximal submodule of $A$ on which the elements $\delta\in\Delta$
act as multiplication by $\theta(\delta)^{i}$. Hence we have equality.
Combining this with the previous claim and we obtain the desired result.
\end{proof}
By Kummer Theory:
\begin{eqnarray*}
\mathfrak{K}^{*}/(\mathfrak{K}^{*})^{p} & \longrightarrow & \textrm{H}^{1}(\mathfrak{K},\mu_{p})\\
\alpha & \mapsto & \varphi_{\alpha}
\end{eqnarray*}
 where $\varphi_{\alpha}(\sigma)=\sigma(\sqrt[p]{\alpha})/\sqrt[p]{\alpha}$,
is an isomorphism. This yields an isomorphism from $e_{1}\cdot\mathfrak{K}^{*}/(\mathfrak{K}^{*})^{p}$
to $e_{1}\cdot\textrm{H}^{1}(\mathfrak{K},\mu_{p})$.

From the five term exact sequence we get:
\begin{eqnarray*}
 & 0\longrightarrow\textrm{H}^{1}(\Delta,\mathbb{Z}/p)\longrightarrow\textrm{H}^{1}(K,\mathbb{Z}/p)\overset{res}{\longrightarrow}\textrm{H}^{1}(\mathfrak{K},\mathbb{Z}/p)^{\Delta}\longrightarrow\textrm{H}^{2}(\Delta,\mathbb{Z}/p)\longrightarrow\textrm{H}^{2}(K,\mathbb{Z}/p)
\end{eqnarray*}
note that $\textrm{H}^{1}(\Delta,\mathbb{Z}/p)=\textrm{H}^{2}(\Delta,\mathbb{Z}/p)=0$
since the order of $\Delta$ divides $p-1$, which is prime to $p$.
Thus we obtain that the restriction map is an isomorphism.

Let us summarize our results in the following claim.
\begin{prop}
\label{iso-mod}The following $\Delta-$modules:
\begin{eqnarray*}
 & \textrm{H}^{1}(K,\mathbb{Z}/p)\otimes\mu_{p}\cong\textrm{H}^{1}(\mathfrak{K},\mathbb{Z}/p)^{\Delta}\otimes\mu_{p}\cong e_{1}\cdot\textrm{H}^{1}(\mathfrak{K},\mu_{p})\cong e_{1}\cdot\mathfrak{K}^{*}/(\mathfrak{K}^{*})^{p}\\
\end{eqnarray*}
are isomorphic.
\end{prop}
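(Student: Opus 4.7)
The plan is to assemble the three isomorphisms already established in the paragraphs preceding the proposition; no new structural input is needed, only careful bookkeeping that each identification is $\Delta$-equivariant.

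First, I would handle the leftmost isomorphism $\textrm{H}^{1}(K,\mathbb{Z}/p)\otimes\mu_{p}\cong\textrm{H}^{1}(\mathfrak{K},\mathbb{Z}/p)^{\Delta}\otimes\mu_{p}$ by citing the five-term exact sequence displayed just above the proposition. Since $|\Delta|$ divides $p-1$ and is therefore coprime to $p$, both $\textrm{H}^{1}(\Delta,\mathbb{Z}/p)$ and $\textrm{H}^{2}(\Delta,\mathbb{Z}/p)$ vanish, forcing the restriction map to be an isomorphism of $\Delta$-modules (with trivial action on the source). Tensoring with $\mu_{p}$ preserves this isomorphism.

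Next, for the middle isomorphism $\textrm{H}^{1}(\mathfrak{K},\mathbb{Z}/p)^{\Delta}\otimes\mu_{p}\cong e_{1}\cdot\textrm{H}^{1}(\mathfrak{K},\mu_{p})$, I would simply invoke Corollary \ref{cor kummer}, which was proved by combining Claim \ref{e0h1->e1h1} with the observation that $e_{0}\cdot\textrm{H}^{1}(\mathfrak{K},\mathbb{Z}/p)=\textrm{H}^{1}(\mathfrak{K},\mathbb{Z}/p)^{\Delta}$.

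Finally, for the rightmost isomorphism $e_{1}\cdot\textrm{H}^{1}(\mathfrak{K},\mu_{p})\cong e_{1}\cdot\mathfrak{K}^{*}/(\mathfrak{K}^{*})^{p}$, I would apply the Kummer isomorphism $\alpha\mapsto\varphi_{\alpha}$ recalled in the paragraph immediately preceding the proposition and verify that it is $\Delta$-equivariant. Concretely, a short computation shows $\delta\cdot\varphi_{\alpha}=\varphi_{\delta(\alpha)}$ (using $\delta(\sqrt[p]{\alpha})^{p}=\delta(\alpha)$), so the map respects the $\Delta$-action; hence applying the idempotent $e_{1}\in\mathbb{Z}/p[\Delta]$ to both sides yields the claimed isomorphism on the $\theta$-isotypic components.

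The main (mild) obstacle will be the $\Delta$-equivariance of the Kummer map, which was not spelled out explicitly earlier; everything else in the proof is a direct quotation of previously established results, assembled in a chain. Chaining the three isomorphisms then yields the proposition.
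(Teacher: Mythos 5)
Your proposal is correct and follows essentially the same route as the paper: the proposition is stated there precisely as a summary of the three identifications you cite (the five-term sequence with $(|\Delta|,p)=1$, Corollary \ref{cor kummer}, and the Kummer isomorphism restricted to the $e_{1}$-component). Your explicit check that $\delta\cdot\varphi_{\alpha}=\varphi_{\delta(\alpha)}$ is a small but welcome addition, since the paper passes over the $\Delta$-equivariance of the Kummer map without comment.
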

\begin{rem}
This claim is used without proof in \cite[Main Lemma]{key-12} where
the field theoretical meaning is explained. 
\end{rem}
Let us denote the ideal in $O_{\mathfrak{K}}$ associated to an element
$\alpha\in\mathfrak{K}^{*}/(\mathfrak{K}^{*})^{p}$ as:
\begin{eqnarray*}
 & (\alpha)=\mathfrak{A}^{p}\cdot\mathfrak{P}_{1}^{a_{1}}\cdot\cdot\cdot\mathfrak{P}_{j}^{a_{j}}\\
\end{eqnarray*}
where $\mathfrak{A}$ is an ideal of $O_{\mathfrak{K}}$, $\mathfrak{P}_{i}$
is a prime ideal of $O_{\mathfrak{K}}$, and for all $1\leq i\leq j$
we have $1\leq a_{i}\leq p-1$. A prime ideal $\mathfrak{q}$ of $K$
is called $relatively\ prime$ to $\alpha\in\mathfrak{K}^{*}/(\mathfrak{K}^{*})^{p}$
if for every $1\leq i\leq j$ we have $\mathfrak{P}_{i}\nmid\mathfrak{q}.$
\begin{lem}
\label{lem:a->x q unram}Let $\mathfrak{q}$ be a prime of $K$. Assume
that $\mathfrak{q}$$\nmid p$ and that $\mathfrak{q}$ is relatively
prime to $\alpha\in\mathfrak{K}^{*}/(\mathfrak{K}^{*})^{p}$. If $x\otimes\zeta\in\textrm{H}^{1}(K,\mathbb{Z}/p)\otimes\mu_{p}$
is the element corresponding to $\alpha$ from proposition \ref{iso-mod}
then $x$ is not ramified at $\mathfrak{q}$.
\end{lem}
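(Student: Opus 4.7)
The plan is to trace $\alpha$ through the chain of isomorphisms in Proposition \ref{iso-mod} backwards from $e_{1}\cdot\mathfrak{K}^{*}/(\mathfrak{K}^{*})^{p}$ to $\textrm{H}^{1}(K,\mathbb{Z}/p)$, checking at each stage that the relative-primality hypothesis on $\alpha$ translates into unramifiedness. First, since $\mathfrak{K}=K(\mu_{p})$ with $p$ odd and $\mathfrak{q}\nmid p$, the extension $\mathfrak{K}/K$ is unramified at $\mathfrak{q}$. Consequently, for any prime $\mathfrak{Q}$ of $\mathfrak{K}$ above $\mathfrak{q}$, the inertia subgroup at $\mathfrak{Q}$ inside the absolute Galois group of $K$ coincides with the inertia at $\mathfrak{q}$.

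On the Kummer side, the hypothesis says that no $\mathfrak{P}_{i}$ in the factorization $(\alpha)=\mathfrak{A}^{p}\cdot\mathfrak{P}_{1}^{a_{1}}\cdots\mathfrak{P}_{j}^{a_{j}}$ lies above $\mathfrak{q}$; hence $v_{\mathfrak{Q}'}(\alpha)\equiv 0\pmod{p}$ for every prime $\mathfrak{Q}'$ of $\mathfrak{K}$ above $\mathfrak{q}$. Because no such $\mathfrak{Q}'$ divides $p$, the Kummer extension $\mathfrak{K}(\sqrt[p]{\alpha})/\mathfrak{K}$ is unramified at every $\mathfrak{Q}'$, so the class $\varphi_{\alpha}\in e_{1}\cdot\textrm{H}^{1}(\mathfrak{K},\mu_{p})$ is unramified at every prime of $\mathfrak{K}$ above $\mathfrak{q}$.

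Next I would push this back. The map $\psi^{*}$ of Claim \ref{e0h1->e1h1} is induced by the isomorphism $\psi\colon\mathbb{Z}/p\to\mu_{p}$, which is $G_{\mathfrak{K}}$-equivariant because both sides are trivial $G_{\mathfrak{K}}$-modules (as $\mu_{p}\subset\mathfrak{K}$); hence $\psi$ matches unramified cohomology classes on both sides, and $\psi^{-1}\varphi_{\alpha}\in\textrm{H}^{1}(\mathfrak{K},\mathbb{Z}/p)$ is unramified at every prime of $\mathfrak{K}$ above $\mathfrak{q}$. The projection $e_{0}$ averages over the $\Delta$-action, and since $\Delta$ permutes primes above $\mathfrak{q}$ transitively, the resulting element $e_{0}\psi^{-1}\varphi_{\alpha}\in\textrm{H}^{1}(\mathfrak{K},\mathbb{Z}/p)^{\Delta}$ remains unramified at $\mathfrak{Q}$. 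Finally, the restriction isomorphism yields $x\in\textrm{H}^{1}(K,\mathbb{Z}/p)$ with $\textrm{res}(x)=e_{0}\psi^{-1}\varphi_{\alpha}$, and viewing $x$ as a homomorphism $G_{K}\to\mathbb{Z}/p$, the fact that its restriction to $G_{\mathfrak{K}}$ vanishes on the inertia at $\mathfrak{Q}$, combined with the equality of this inertia with that at $\mathfrak{q}$, shows that $x$ itself is unramified at $\mathfrak{q}$.

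The step most in need of care is this last one: the ramification of $x\in\textrm{H}^{1}(K,\mathbb{Z}/p)$ at $\mathfrak{q}$ must be read off from the ramification of its restriction at a prime above $\mathfrak{q}$, and this transfer rests precisely on $\mathfrak{K}/K$ being unramified at $\mathfrak{q}$, which in turn is the reason the hypothesis $\mathfrak{q}\nmid p$ is present in the statement.
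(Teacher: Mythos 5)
Your proposal is correct and follows essentially the same route as the paper: both arguments rest on the facts that $\mathfrak{K}(\sqrt[p]{\alpha})/\mathfrak{K}$ is unramified above $\mathfrak{q}$ because $\alpha$ is relatively prime to $\mathfrak{q}$ and $\mathfrak{q}\nmid p$, and that unramifiedness descends to $x$ over $K$ because $\mathfrak{K}/K$ is unramified at $\mathfrak{q}$. The paper packages the descent via the field diagram $\mathfrak{K}\cdot K_{x}=\mathfrak{K}(\sqrt[p]{\alpha})$ (so $K_{x}$ sits inside an extension unramified at $\mathfrak{q}$), whereas you trace inertia groups through each isomorphism of Proposition \ref{iso-mod}; this is the same argument with more explicit bookkeeping.
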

\begin{proof}
The element $\alpha$ corresponds by Kummer theory to an extension
$\mathfrak{K}(\sqrt[p]{\alpha})/\mathfrak{K}$ using a character in
$\textrm{H}^{1}(\mathfrak{K},\mu_{p})$ (as shown explicitly by the
isomorphism $\mathfrak{K}^{*}/(\mathfrak{K}^{*})^{p}\longrightarrow\textrm{H}^{1}(\mathfrak{K},\mu_{p})$).
Proposition \ref{iso-mod} shows us that this character actually comes
from a character $x\in\textrm{H}^{1}(K,\mathbb{Z}/p)$. Let $K_{x}$
denote the cyclic extension of $K$ defined by $x$. This means that
the compositum of $K_{x}$ with $\mathfrak{K}$ equals $\mathfrak{K}(\sqrt[p]{\alpha})$,
namely:

\begin{align*}
 & \begin{array}{ccc}
\mathfrak{K}\cdot K_{x} & = & \mathfrak{K}\left(\sqrt[p]{\alpha}\right)\\
\mid &  & \mid\\
K_{x} &  & \mathfrak{K}\\
\ \ \ \diagdown &  & \diagup\ \ \ \\
 & K
\end{array}\\
\end{align*}

Let $\mathfrak{q}$ be a prime of $K$ that is relatively prime to
$\alpha$ and satisfies that $\mathfrak{q}$$\nmid p$ . It is then
evident that $\mathfrak{q}$ is not ramified in $\mathfrak{K}(\sqrt[p]{\alpha})$.
Since $\mathfrak{K}(\sqrt[p]{\alpha})=\mathfrak{K}\cdot K_{x}$ we
deduce that $\mathfrak{q}$ is also unramified in $K_{x}$. It is
then clear that $(K_{x})_{\mathfrak{q}}$ is contained in $K_{\mathfrak{q},nr}$
and thus $x_{\mathfrak{q}}\in\textrm{H}_{nr}^{1}(K_{\mathfrak{q}},\mathbb{Z}/p)$,
namely, $x$ is unramified at $\mathfrak{q}$.
\end{proof}
Let $K\mid k$ be a Galois extension, $\Omega\mid K$ an abelian extension,
and assume that $\mu_{p}\nsubseteq K$. Let $A$ be a trivial $G_{K}$-module
with $pA=0$, $S$ be a finite set of primes of $K$, and let $y_{\mathfrak{p}}\in\textrm{H}^{1}(K_{\mathfrak{p}},\mathbb{Z}/p)$
for $\mathfrak{p}\in S$. In \cite[Main Lemma]{key-12} we obtain
an element $x\in\textrm{H}^{1}(K,A)$ such that:
\begin{enumerate}
\item $x_{\mathfrak{p}}=y_{\mathfrak{p}}$ for $\mathfrak{p}\in S$.
\item if $\mathfrak{p}\notin S$ then $x_{\mathfrak{p}}$ is cyclic, and
if $x_{\mathfrak{p}}$ is ramified then the prime $\mathfrak{p}_{0}=\mathfrak{p}\cap k$
of $k$, splits completely in $\Omega$ and $x_{\mathfrak{p}'}=0$
for all primes $\mathfrak{p}'\mid\mathfrak{p}_{0}$ of $K$ different
from $\mathfrak{p}$.
\end{enumerate}
This element $x\in\textrm{H}^{1}(K,A)$ is obtained as the corresponding
element to a certain $\alpha\in e_{1}\cdot\mathfrak{K}^{*}/(\mathfrak{K}^{*})^{p}$.
This element is calculated in \cite[Appendix]{key-12} and is the
product of two elements $\gamma\cdot\delta$. We are interested in
the properties of these elements outside of $S$. Indeed, it is shown
that they satisfy the following conditions:
\begin{enumerate}
\item $\delta_{\mathfrak{p}}\in U_{\mathfrak{p}}\mathfrak{K}_{\mathfrak{p}}^{*}/(\mathfrak{K}_{\mathfrak{p}}^{*})^{p}$
for $\mathfrak{p}\notin S\cup\{\mathfrak{q}\}$, where $\mathfrak{q}$
is a prime of $\mathfrak{K}$ which is not in $S$.
\item $\gamma_{\mathfrak{p}}\in U_{\mathfrak{p}}\mathfrak{K}_{\mathfrak{p}}^{*}/(\mathfrak{K}_{\mathfrak{p}}^{*})^{p}$
for $\mathfrak{p}\notin\{\mathfrak{p}_{i_{1}},\mathfrak{p}_{i_{2}}\}$,
where $\mathfrak{p}_{i_{1}},\mathfrak{p}_{i_{2}}$ are primes of $\mathfrak{K}$
which are not in $S$.
\end{enumerate}
We are interested in understanding the element $x\in\textrm{H}^{1}(K,A)$
which is obtained from the Main Lemma and corresponds to $\alpha\in\mathfrak{K}^{*}/(\mathfrak{K}^{*})^{p}$.
Let us first assume that $A=\mathbb{Z}/p$, and let $S$ be a finite
set of primes of $K$. Under the previous notation, for primes $\mathfrak{p}\nmid p$
and $\mathfrak{p}\notin S\cup\{\mathfrak{q},\mathfrak{p}_{i_{1}},\mathfrak{p}_{i_{2}}\}$
we know that $\alpha_{\mathfrak{p}}=\delta_{\mathfrak{p}}\gamma_{\mathfrak{p}}$
is a unit in $\mathfrak{K}_{\mathfrak{p}}^{*}/(\mathfrak{K}_{\mathfrak{p}}^{*})^{p}$
and thus relatively prime to $\alpha$. Hence, from lemma \ref{lem:a->x q unram}
we deduce that $x$ is not ramified at $\mathfrak{p}$. We deduce
that $x$ can be ramified outside of $S$ at no more than 3 primes.
The general case is done by induction over $dim_{\mathbb{Z}/p}A$.
For $x\in\textrm{H}^{1}(K,A)$ we have the following decomposition:
$x=x^{'}+x^{''}$, where $x^{'}\in\textrm{H}^{1}(K,A')$, $x^{''}\in\textrm{H}^{1}(K,\mathbb{Z}/p)$,
and $A=A'\oplus\mathbb{Z}/p$. Similarly, we have $x_{\mathfrak{p}}=x_{\mathfrak{p}}^{'}+x_{\mathfrak{p}}^{''}$
for $x_{\mathfrak{p}}\in\textrm{H}^{1}(K_{\mathfrak{p}},A)$. In the
proof of the \cite[Main Lemma]{key-12} it is shown how to find such
$x_{\mathfrak{p}}^{'},x_{\mathfrak{p}}^{''}$ with properties which
we will soon describe. Let $V$ denote a finite set of primes of $K$
which is closed under conjugation over $k$ and contains the set $S\cup\{\mathfrak{p}\mid\ x_{\mathfrak{p}}^{'}\ is\ ramified\}$.
Then we have:
\begin{enumerate}
\item $x_{\mathfrak{p}}=x_{\mathfrak{p}}^{'}+x_{\mathfrak{p}}^{''}=x_{\mathfrak{p}}^{'}$
for $\mathfrak{p}\in V-S$.
\item $x_{\mathfrak{p}}=x_{\mathfrak{p}}^{'}+x_{\mathfrak{p}}^{''}=x_{\mathfrak{p}}^{''}$
for $\mathfrak{p}\notin V$.
\end{enumerate}
It is then clear that $x$ is ramified outside of $S$ at the same
places $x^{'}$ and $x^{''}$ are ramified. $x^{''}$ is obtained
in the same way as the special case where $A=\mathbb{Z}/p$, namely
at most 3 primes outside of $S$. $x^{'}$ is obtained by induction
and it is evident that it is ramified outside of $S$ at no more then
$3\cdot rank(A')$ primes. To conclude, $x\in\textrm{H}^{1}(K,A)$
is ramified outside of $S$ at no more then $3\cdot rank(A)$ primes.
Let us conclude what we have just shown in the following proposition:
\begin{prop}
\label{prop:main.lemma}Under the previous notations, $x\in\textrm{H}^{1}(K,A)$
is ramified outside of $S$ at no more then $3\cdot rank(A)$ primes.
\end{prop}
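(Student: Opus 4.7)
The plan is to prove the bound by induction on $n = \dim_{\mathbb{Z}/p} A$, with the base case $A = \mathbb{Z}/p$ handled by combining Proposition \ref{iso-mod} with Lemma \ref{lem:a->x q unram}. In the base case, the global element $x \in \textrm{H}^{1}(K, \mathbb{Z}/p)$ produced by the Main Lemma of \cite{key-12} corresponds, via the chain of isomorphisms in Proposition \ref{iso-mod}, to an element $\alpha \in e_{1}\cdot \mathfrak{K}^{*}/(\mathfrak{K}^{*})^{p}$, which is in turn exhibited as a product $\alpha = \gamma\cdot\delta$. The strategy is to exploit the two local properties of $\gamma$ and $\delta$ already recorded above: $\delta_{\mathfrak{p}}$ is a unit modulo $p$-th powers outside $S \cup \{\mathfrak{q}\}$, and $\gamma_{\mathfrak{p}}$ is a unit modulo $p$-th powers outside $\{\mathfrak{p}_{i_{1}}, \mathfrak{p}_{i_{2}}\}$.

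Combining these, for every prime $\mathfrak{p}\nmid p$ of $\mathfrak{K}$ lying outside $S \cup \{\mathfrak{q}, \mathfrak{p}_{i_{1}}, \mathfrak{p}_{i_{2}}\}$, the class $\alpha_{\mathfrak{p}}$ is a unit in $\mathfrak{K}_{\mathfrak{p}}^{*}/(\mathfrak{K}_{\mathfrak{p}}^{*})^{p}$, so $\mathfrak{p}$ is relatively prime to $\alpha$ in the sense defined just before Lemma \ref{lem:a->x q unram}. That lemma then immediately yields that $x$ is unramified at the prime of $K$ below $\mathfrak{p}$, so $x$ can ramify outside $S$ at no more than the three primes of $K$ below $\mathfrak{q}, \mathfrak{p}_{i_{1}}, \mathfrak{p}_{i_{2}}$. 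This is the required bound $3 = 3\cdot\mathrm{rank}(\mathbb{Z}/p)$ for the base case.

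For the inductive step, fix a direct-sum decomposition $A = A' \oplus \mathbb{Z}/p$ as $\mathbb{Z}/p$-vector spaces, giving the splitting $\textrm{H}^{1}(K,A) = \textrm{H}^{1}(K,A') \oplus \textrm{H}^{1}(K,\mathbb{Z}/p)$ and $x = x' + x''$. The main obstacle, and the reason the Main Lemma has to be invoked with some care, is that the auxiliary $x''$ must be produced in a way compatible with the ramification that $x'$ has already introduced outside $S$. The device is to enlarge $S$ to a conjugation-closed finite set $V \supseteq S \cup \{\mathfrak{p} : x'_{\mathfrak{p}}\text{ is ramified}\}$ and to build $x''$ by invoking the base case with $V$ in place of $S$, together with suitable local data on $V - S$. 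The compatibility conditions $x_{\mathfrak{p}} = x'_{\mathfrak{p}}$ for $\mathfrak{p}\in V - S$ and $x_{\mathfrak{p}} = x''_{\mathfrak{p}}$ for $\mathfrak{p}\notin V$ then confine the ramification locus of $x$ outside $S$ to the union of the ramification locus of $x'$ outside $S$ (of size at most $3\cdot\mathrm{rank}(A')$ by induction) and that of $x''$ outside $V$ (of size at most $3$ by the base case). Summing yields $3\cdot\mathrm{rank}(A)$, completing the induction.
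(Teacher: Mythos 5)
Your proposal is correct and follows essentially the same route as the paper: the base case via the factorization $\alpha=\gamma\cdot\delta$, the unit property of $\alpha_{\mathfrak{p}}$ outside $S\cup\{\mathfrak{q},\mathfrak{p}_{i_{1}},\mathfrak{p}_{i_{2}}\}$, and Lemma \ref{lem:a->x q unram}; the inductive step via the splitting $A=A'\oplus\mathbb{Z}/p$, the conjugation-closed set $V$, and the two compatibility conditions confining the ramification of $x$ to that of $x'$ and $x''$. No substantive differences from the paper's argument.
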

We will need to work with elements of $\textrm{H}^{1}(k,A)$ rather
than elements of $\textrm{H}^{1}(K,A)$ and for this we have the following
theorem.
\begin{thm}
\label{theorem1.neu}\cite{key-12} Let $A$ be a simple $G_{k}$-module
with $pA=0$. Let $K\mid k$ be a Galois extension such that\footnote{For a finite $G_{k}$-module $A$ we denote by $k(A)\mid k$ the smallest
extension of $k$ for which $A$ is a trivial $G_{k(A)}$-module.} $k(A)\subseteq K$ but $\mu_{p}\nsubseteq K$ and let $\Omega\mid K$
be an abelian extension. Let $S$ be a finite set of primes of $k$
and $y_{\mathfrak{p}}\in\textrm{H}^{1}(k_{\mathfrak{p}},A)$ for $\mathfrak{p}\in S$.
Then there exists an element $x\in\textrm{H}^{1}(k,A)$ such that\\
1) $x_{\mathfrak{p}}=y_{\mathfrak{p}}$ for $\mathfrak{p}\in S$.\\
2) If $\mathfrak{p}\notin S$, then $x_{\mathfrak{p}}$ is cyclic
and if $x_{\mathfrak{p}}$ is ramified then $\mathfrak{p}$ splits
completely in $\Omega$.
\end{thm}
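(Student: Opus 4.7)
The plan is to reduce Theorem~\ref{theorem1.neu} to the variant of the Main Lemma discussed above, which produces an element of $\textrm{H}^{1}(K,A)$, and then to descend from $K$ down to $k$ via the inflation--restriction exact sequence. Two structural points make this viable: because $k(A)\subseteq K$, the module $A$ is a trivial $G_{K}$-module, so the Main Lemma does apply over $K$; and because $A$ is a simple $G_{k}$-module, one has enough rigidity to control the descent obstructions in a uniform way.

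Concretely, I would proceed as follows. Set $G=\textrm{Gal}(K/k)$. For each $\mathfrak{p}\in S$ and each prime $\mathfrak{P}$ of $K$ lying above $\mathfrak{p}$, form the restriction $y_{\mathfrak{P}}:=\textrm{res}(y_{\mathfrak{p}})\in\textrm{H}^{1}(K_{\mathfrak{P}},A)$, and let $S'$ be the finite $G$-stable set of all such $\mathfrak{P}$. Apply the Main Lemma over $K$ to this data to obtain $x'\in\textrm{H}^{1}(K,A)$ with $x'_{\mathfrak{P}}=y_{\mathfrak{P}}$ for $\mathfrak{P}\in S'$, and such that outside $S'$ each $x'_{\mathfrak{P}}$ is cyclic and, whenever ramified, forces $\mathfrak{P}\cap k$ to split completely in $\Omega$. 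Now observe that any two solutions differ by an element of the restricted kernel
\[
\textrm{H}^{1}_{S'}(K,A):=\ker\bigl(\textrm{H}^{1}(K,A)\to\prod_{\mathfrak{P}\in S'}\textrm{H}^{1}(K_{\mathfrak{P}},A)\bigr);
\]
since the data $\{y_{\mathfrak{P}}\}$ is $G$-equivariant by construction, the orbit $\{\sigma x':\sigma\in G\}$ lies in a single coset modulo $\textrm{H}^{1}_{S'}(K,A)$, and one adjusts $x'$ inside this coset to a $G$-invariant representative. Finally, the inflation--restriction sequence
\begin{eqnarray*}
0\longrightarrow\textrm{H}^{1}(G,A)\longrightarrow\textrm{H}^{1}(k,A)\overset{\textrm{res}}{\longrightarrow}\textrm{H}^{1}(K,A)^{G}\overset{\textrm{tg}}{\longrightarrow}\textrm{H}^{2}(G,A)
\end{eqnarray*}
produces the desired $x\in\textrm{H}^{1}(k,A)$ as a lift of $x'$, once the transgression of $x'$ has been shown to vanish; conditions (1) and (2) then follow by pulling the local behaviour of $x'$ back to $k$.

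The main obstacle will be Step~3--4: simultaneously achieving $G$-invariance of $x'$ and vanishing of its image under the transgression in $\textrm{H}^{2}(G,A)$, since neither can be demanded a priori. The strategy is to exploit the full freedom provided by the Main Lemma---one may modify $x'$ by any class in $\textrm{H}^{1}_{S'}(K,A)$ without disturbing its values at $S'$ or its ramification profile outside---and to use the simplicity of $A$ as a $G_{k}$-module to show that this freedom already covers, modulo coboundaries, the relevant pieces of $\textrm{H}^{1}(G,\textrm{H}^{1}_{S'}(K,A))$ and of the image of the transgression. It is precisely the simplicity hypothesis, absent from the Main Lemma itself, that makes the descent from $K$ to $k$ tractable, and the induction on $\dim_{\mathbb{Z}/p}A$ already sketched after Proposition~\ref{prop:main.lemma} is what reduces the general case to this simple one.
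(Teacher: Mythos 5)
The paper does not actually prove this theorem: it is imported from Neukirch \cite{key-12}, and the only structural information the paper gives about its proof appears in Claim \ref{The-element x_p}, namely that Neukirch's element is a sum $x=\zeta+z$ in $\textrm{H}^{1}(k,A)$, where $\zeta$ is the \emph{image} of the Main-Lemma element $\overline{\zeta}\in\textrm{H}^{1}(K,A)$ under a map going forward into $\textrm{H}^{1}(k,A)$, and $z$ is an auxiliary class arranged so that $x$ vanishes at the extra primes where $z$ ramifies. Your route is genuinely different: you try to make the Main-Lemma element $G$-invariant and then lift it backwards along restriction via inflation--restriction. Two steps of that route fail as stated.

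First, the ``freedom'' you invoke is not available. A class in $\textrm{H}^{1}_{S'}(K,A)$ is constrained only to vanish locally at $S'$; it may be ramified at arbitrarily many primes outside $S'$, including primes that do not split in $\Omega$. So modifying $x'$ by such a class preserves condition (1) over $K$ but destroys exactly the ramification control in condition (2) that the theorem is about; you therefore cannot adjust $x'$ inside its coset to a $G$-invariant representative without losing the conclusion, and the obstruction to doing so, a class in $\textrm{H}^{1}(G,\textrm{H}^{1}_{S'}(K,A))$, has no reason to vanish. Second, the transgression obstruction in $\textrm{H}^{2}(G,A)$ is genuine: $G=\textrm{Gal}(K/k)$ is not assumed to have order prime to $p$ (in the application $K$ is the field cut out by $\psi_{i-1}$, and $p$ typically divides $|G_{i-1}|$), and simplicity of $A$ as a $G_{k}$-module does not kill $\textrm{H}^{2}(G,A)$ --- already $A=\mathbb{Z}/p$ with trivial action is simple and $\textrm{H}^{2}(G,\mathbb{Z}/p)\neq0$ whenever $p\mid|G|$. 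Even granting a lift, it is determined only modulo classes inflated from $\textrm{H}^{1}(G,A)$, and neither $x_{\mathfrak{p}}=y_{\mathfrak{p}}$ nor unramifiedness of $x_{\mathfrak{p}}$ follows from the corresponding statements for $\textrm{res}(x)$ at primes of $K$, since the local restriction maps need not be injective and $K\mid k$ may itself be ramified. This is precisely why the construction in \cite{key-12} descends by pushing $\textrm{H}^{1}(K,A)$ forward into $\textrm{H}^{1}(k,A)$ and then correcting with the auxiliary class $z$, rather than lifting against restriction.
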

\begin{claim}
\label{The-element x_p}The element \textit{$x\in\textrm{H}^{1}(k,A)$},
which is obtained in Theorem \ref{theorem1.neu}, is ramified at no
more then $3\cdot rank(A)$ primes which are not in $S$.
\end{claim}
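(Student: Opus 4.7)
The plan is to reduce this claim directly to Proposition \ref{prop:main.lemma} by tracking Neukirch's construction in the proof of Theorem \ref{theorem1.neu}. In that proof the element $x \in \textrm{H}^{1}(k,A)$ is produced by first constructing an auxiliary element $\tilde{x} \in \textrm{H}^{1}(K,A)$ via the Main Lemma applied over $K$, with local data at primes of $K$ above $S$ chosen compatibly (under restriction) with the prescribed $y_{\mathfrak{p}} \in \textrm{H}^{1}(k_{\mathfrak{p}},A)$, and then descending to $\textrm{H}^{1}(k,A)$ via the inflation--restriction sequence. Let $S_{K}$ denote the set of primes of $K$ lying above the primes in $S$. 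Proposition \ref{prop:main.lemma} then yields that $\tilde{x}$ is ramified at no more than $3 \cdot \textrm{rank}(A)$ primes of $K$ outside $S_{K}$.

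Next I would transfer this bound from $K$ down to $k$. By the standard functoriality of ramification under restriction, if $\mathfrak{p}_{0} \notin S$ is a prime of $k$ at which $x$ is ramified, then for every prime $\mathfrak{P}$ of $K$ above $\mathfrak{p}_{0}$ the localisation $\tilde{x}_{\mathfrak{P}}$ is ramified as well; in particular at least one ramified prime of $K$ sits above each ramified prime of $k$.

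The decisive step, and what makes the count transfer without loss, is the separation property of the Main Lemma recorded in the paragraph preceding Proposition \ref{prop:main.lemma}: whenever $\tilde{x}_{\mathfrak{P}}$ is ramified at some $\mathfrak{P} \notin S_{K}$, the class $\tilde{x}_{\mathfrak{P}'}$ vanishes (hence is unramified) at every other prime $\mathfrak{P}' \mid \mathfrak{p}_{0} = \mathfrak{P} \cap k$ of $K$ distinct from $\mathfrak{P}$. Hence the assignment sending a ramified prime $\mathfrak{p}_{0}$ of $k$ outside $S$ to the unique prime $\mathfrak{P}$ of $K$ above it at which $\tilde{x}$ is ramified is well-defined and injective. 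Combined with the bound of the first step, this gives at most $3 \cdot \textrm{rank}(A)$ ramified primes of $k$ outside $S$, as claimed.

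The main obstacle is to make the first paragraph precise: verifying that the element $x$ delivered by Theorem \ref{theorem1.neu} truly is the descent of an element produced by the Main Lemma whose local data at primes above $S$ does not introduce fresh ramification outside $S_{K}$, and that the descent via inflation--restriction preserves both the set $S_{K}$ and the separation clause recalled above. Both points are implicit in the construction of \cite[Main Lemma]{key-12} and rely on the compatibility of restriction with localisation at each prime together with the cohomological conditions on $\textrm{Gal}(K/k)$ that underlie the descent.
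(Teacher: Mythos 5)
Your overall strategy --- reducing the claim to Proposition \ref{prop:main.lemma} by tracking how the class over $k$ is built from a class over $K$ --- is the same as the paper's, but your description of Neukirch's construction is incomplete in a way that leaves a real gap. The element $x$ delivered by Theorem \ref{theorem1.neu} is \emph{not} simply the descent of a single Main-Lemma element; it is a sum $x=\zeta+z$ of two classes in $\textrm{H}^{1}(k,A)$, where only $\zeta$ is the image of the element $\overline{\zeta}\in\textrm{H}^{1}(K,A)$ furnished by Proposition \ref{prop:main.lemma}, and $z$ is an auxiliary class used to adjust the local conditions. Your argument bounds the ramification coming from the $\zeta$-piece but says nothing about the primes at which $z$ ramifies, and a priori each of those could be a ramified prime of $x$ outside $S$ not accounted for by your count. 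The paper's proof closes exactly this hole: setting $V=S\cup\{\mathfrak{p}\mid z_{\mathfrak{p}}\ \text{is ramified}\}$, it invokes the fact (established inside the proof of Theorem \ref{theorem1.neu}) that $x_{\mathfrak{p}}=0$ for $\mathfrak{p}\in V-S$, so the primes where $z$ ramifies contribute nothing; and for $\mathfrak{p}\notin V$ the unramifiedness of $z_{\mathfrak{p}}$ forces any ramification of $x_{\mathfrak{p}}$ to come from $\zeta_{\mathfrak{p}}$, whence the bound $3\cdot rank(A)$. You flagged your uncertainty about the construction yourself, but without the vanishing statement on $V-S$ the proof does not go through.

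On the positive side, your second and third paragraphs address a point the paper's own proof passes over in silence: why a bound on the number of ramified primes of $K$ outside $S_{K}$ yields the same bound on primes of $k$ outside $S$, given that several primes of $K$ may lie over one prime of $k$. Your appeal to the separation clause of the Main Lemma (ramification at $\mathfrak{p}$ forces $x_{\mathfrak{p}'}=0$ at all other primes $\mathfrak{p}'$ over the same prime of $k$) to get an injection from ramified primes of $k$ into ramified primes of $K$ is a sensible way to justify that step, and is a worthwhile supplement to the argument --- but it does not substitute for handling the summand $z$.
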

\begin{proof}
The element is given as the sum of two other elements: $x=\zeta+z\in\textrm{H}^{1}(k,A)$.
Let $V=S\cup\{\mathfrak{p}\mid z_{\mathfrak{p}}\ is\ ramified\}$.
In the proof of Theorem \ref{theorem1.neu} it is shown that if $\mathfrak{p}\in V-S$
then $x_{\mathfrak{p}}=0$, namely $x$ is unramified at $\mathfrak{p}$.
If $x_{\mathfrak{p}}$ is ramified for $\mathfrak{p}\notin V$ then
since $z_{\mathfrak{p}}$ is unramified we must have that $\zeta_{\mathfrak{p}}$
is ramified. The element $\zeta$ is given as the image of an element
$\overline{\zeta}\in\textrm{H}^{1}(K,A)$ which is the element obtained
in proposition \ref{prop:main.lemma}. Namely, $\zeta$ can ramify
at no more then $3\cdot rank(A)$ primes outside of $S$, and thus
$x$ can be ramified at no more then $3\cdot rank(A)$ primes which
are not in $S$.
\end{proof}

\section{{\large{}Upper bound for all odd order groups.}}

Let $G$ and $G'$ be profinite groups with a given homomorphisms
$f,f'$ into a finite group $\Gamma$. We consider all $\psi\in\textrm{Hom}(G,G')$
for which the following diagram:
\begin{align*}
 & \begin{array}{ccc}
G & \overset{\psi}{\rightarrow} & G'\\
\downarrow & \swarrow\\
\Gamma
\end{array}\\
\end{align*}

commutes. Let $\psi_{1},\psi_{2}$ be two such elements, they are
called equivalent if there exists an element $a\in ker(f')$ such
that:
\begin{eqnarray*}
 & \forall x\in G\ \psi_{2}(x)=a\psi_{1}(x)a^{-1}
\end{eqnarray*}
namely if $\psi_{1}$ and $\psi_{2}$ are conjugates by some element
of $ker(f')$. The equivalence classes are denoted by $[\psi]$ and
$\mathcal{HOM}{}_{\Gamma}(G,G')$ denotes the set of all equivalence
classes. Moreover, the set of surjective representatives is denoted
by $\mathcal{HOM}{}_{\Gamma}(G,G')_{sur}$. Let $[\psi]\in\mathcal{HOM}{}_{\Gamma}(G_{k},G)$
and let us consider the canonical restriction map:
\begin{eqnarray*}
\mathcal{HOM}{}_{\Gamma}(G_{k},G) & \rightarrow\prod_{\mathfrak{p}} & \mathcal{HOM}{}_{\Gamma}(G_{k_{p}},G)\\
\end{eqnarray*}
An element $[\psi]\in\mathcal{HOM}{}_{\Gamma}(G_{k},G)$ is called
unramified at $\mathfrak{p}$ if the inertia group $I_{k_{\mathfrak{p}}}$
of $G_{k_{p}}$ is contained in the kernel of $\psi_{\mathfrak{p}}$,
where $[\psi_{\mathfrak{p}}]\in\mathcal{HOM}{}_{\Gamma}(G_{k_{p}},G)$
is the corresponding element.

Let $k$ be an algebraic number field. Assume that $\varphi$ is an
homomorphism of $G_{k}$ into the finite group $\Gamma$, and denote
by $K$ the kernel of $\varphi$. Let us look at the following embedding
problem:
\begin{eqnarray*}
 &  & \mathfrak{G}\\
 & \swarrow & \begin{array}{cc}
\downarrow & \varphi\end{array}\\
1\rightarrow A\rightarrow G & \rightarrow & \ \Gamma\rightarrow1
\end{eqnarray*}
where $A$ is an elementary abelian group. Let $E\rightarrow G$ be
a surjective homomorphism with a solvable kernel of exponent $e$
and let $n$ be a multiple of $ep$.
\begin{lem}
\label{lemma6 neu}\cite{key-12} Let $S$ be an arbitrary set of
primes of $k$ and assume\footnote{m($K$) denotes the number of roots of unity in $K$.}
that $(n,m(K))=1$. If $\underset{\mathfrak{p}}{\prod}\mathcal{HOM}{}_{\Gamma}(G_{k_{p}},G)\neq\phi$,
then there exists an element $\psi\in\mathcal{HOM}{}_{\Gamma}(G_{k},G)_{sur}$
with the following properties:\\
1) $\psi$ induces given elements $\psi_{\mathfrak{p}}\in\mathcal{HOM}{}_{\Gamma}(G_{k_{p}},G)$
at primes $\mathfrak{p}\in S$.\\
2) If $\mathfrak{p}$ is a prime of $k$ which is not in $S$ and
is unramified in $K\mid k$, then $\mathcal{HOM}{}_{G}(G_{k_{p}},E)\neq\phi$.
\\
3) For the field $N$ defined by $\psi:G_{k}\rightarrow G$ we have
$(n,m(N))=1$.
\end{lem}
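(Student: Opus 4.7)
The plan is to combine Claim \ref{The-element x_p} with the torsor description of the embedding problem. When nonempty, $\mathcal{HOM}_\Gamma(G_k,G)$ is a principal homogeneous space under $\mathrm{H}^1(k,A)$ (up to $\ker(G\to\Gamma)$-conjugation), and the obstruction to its being nonempty lies in $\mathrm{H}^2(k,A)$, controlled by the local obstructions. The local-global hypothesis $\prod_\mathfrak{p}\mathcal{HOM}_\Gamma(G_{k_\mathfrak{p}},G)\neq\emptyset$ therefore yields an initial global $\psi_0\in\mathcal{HOM}_\Gamma(G_k,G)$, and for each $\mathfrak{p}\in S$ the difference $\psi_\mathfrak{p}-(\psi_0)_\mathfrak{p}$ is a class $z_\mathfrak{p}\in\mathrm{H}^1(k_\mathfrak{p},A)$ that we wish to globalize while controlling ramification.

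Next, I would carefully choose an auxiliary abelian extension $\Omega\mid K$ encoding both remaining conditions. To enforce condition (2), $\Omega$ should contain a suitable ray-class/Kummer extension of $K$ such that any unramified prime $\mathfrak{p}\notin S$ splitting completely in $\Omega$ has Frobenius image in $G$ lying in a subgroup over which the cover $E\to G$ splits, so that a local homomorphism $G_{k_\mathfrak{p}}\to E$ exists. Since $\ker(E\to G)$ is solvable of exponent $e\mid n$, one can build $\Omega$ layer by layer from $p$-th power Kummer extensions after adjoining the needed roots of unity. To enforce condition (3), enlarge $\Omega$ to contain $K(\mu_n)$, which is compatible because $(n,m(K))=1$. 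Then apply Claim \ref{The-element x_p} with this $\Omega$, the set $S$, and the data $z_\mathfrak{p}$, to obtain $x\in\mathrm{H}^1(k,A)$ with $x_\mathfrak{p}=z_\mathfrak{p}$ on $S$ while, outside $S$, each $x_\mathfrak{p}$ is cyclic and, if ramified, its underlying prime splits completely in $\Omega$. Setting $\psi:=\psi_0\cdot x$ produces an element of $\mathcal{HOM}_\Gamma(G_k,G)$ realizing the prescribed $y_\mathfrak{p}$ on $S$ with ramification outside $S$ confined to primes splitting completely in $\Omega$.

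Surjectivity is then forced by a standard augmentation trick: before running the construction, enlarge $S$ to include auxiliary primes whose prescribed Frobenius data, together with the image of $\psi_0$, generate $G$. Conditions (2) and (3) then follow from the splitting constraint. For (2), the definition of $\Omega$ guarantees local $E$-liftability at every prime outside $S$ that ramifies for $\psi$, while unramified-in-$K$ primes that are also unramified for $\psi$ lift trivially through $E$. For (3), if some $\mu_\ell$ with $\ell\mid n$ lay inside $N$, then $k(\mu_\ell)\subseteq N$; but every rational prime dividing $\ell$ would then ramify in $N\mid k$, contradicting that the ramified primes outside $S$ split completely in $K(\mu_n)\subseteq\Omega$, together with the coprimality $(n,m(K))=1$ which handles the primes in $S$.

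The hard part is the construction of $\Omega$ in the second step, simultaneously encoding local $E$-liftability and root-of-unity control. The $E$-liftability is delicate because $E\to G$ need not be abelian; the trick, as in Neukirch's original argument, is to exploit the solvable exponent-$e$ hypothesis on $\ker(E\to G)$ and peel it off inductively into Kummer layers over $K(\mu_n)$, where complete splitting of $\mathfrak{p}$ in the relevant Kummer extension forces the local lift to exist via a direct Frobenius computation. Gluing this cascade of extensions into a single abelian $\Omega\mid K$ with the correct splitting behavior, compatibly with $K(\mu_n)$ and the coprimality assumption, is the technical heart of the lemma; everything else is bookkeeping on top of Claim \ref{The-element x_p}.
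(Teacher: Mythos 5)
First, a point of reference: the paper does not prove Lemma \ref{lemma6 neu} at all --- it is imported verbatim from Neukirch \cite{key-12} (Lemma 6 there), so there is no in-paper argument to compare you against. Your sketch does reproduce the broad architecture of Neukirch's proof: obtain an initial $\psi_{0}\in\mathcal{HOM}{}_{\Gamma}(G_{k},G)$ from the local-global hypothesis, twist it by a class $x\in\textrm{H}^{1}(k,A)$ supplied by Theorem \ref{theorem1.neu} (not by Claim \ref{The-element x_p}, which only counts ramified primes and is logically downstream) relative to a well-chosen abelian extension $\Omega\mid K$, and extract conditions (2) and (3) from the requirement that the newly ramified primes split completely in $\Omega$. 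In the paper's own application (the proof of Lemma \ref{element obt from lemma 6}) the relevant choice is simply $\Omega=N_{0}(\zeta_{n})$.

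However, the step you yourself flag as ``the technical heart'' is genuinely missing, and the mechanism you propose for it would not work. You want $\Omega$ to force the Frobenius of a completely split prime into ``a subgroup over which $E\rightarrow G$ splits''; complete splitting in an abelian extension of $K$ cannot steer Frobenius into a prescribed proper subgroup of a non-abelian $G$, and for a prime that ramifies in the new solution it is the entire local decomposition group, not just Frobenius, that must lift to $E$. The actual mechanism is arithmetic: complete splitting in $k(\zeta_{n})\subseteq\Omega$ forces $q\equiv1\pmod{n}$, where $q$ is the cardinality of the residue field at $\mathfrak{p}$, so the tame local relation $\sigma\tau\sigma^{-1}=\tau^{q}$ becomes trivial on every quotient of exponent dividing $n$; since $\ker(E\rightarrow G)$ is solvable of exponent $e$ with $ep\mid n$, any local homomorphism to $G$ then lifts to $E$, while unramified local homomorphisms lift because $\textrm{Gal}(k_{\mathfrak{p},nr}/k_{\mathfrak{p}})\cong\hat{\mathbb{Z}}$ is projective. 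Two further repairs are needed: your argument for (3) does not go through as stated ($\mu_{\ell}\subseteq N$ does not imply that primes dividing $\ell$ ramify in $N\mid k$, so no contradiction with the splitting condition is obtained without a further argument), and the surjectivity ``augmentation trick'' of enlarging $S$ must be reconciled with conclusion (1), which prescribes the local behaviour at exactly the given set $S$.
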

\begin{rem}
The group $E$ in the above lemma is needed in order to keep solving
the embedding problems.
\end{rem}
\begin{lem}
\label{element obt from lemma 6}Under the above notation, the element
obtained in Lemma \ref{lemma6 neu} is ramified outside of $S$ at
no more then $3\cdot rank(A)$ primes.
\end{lem}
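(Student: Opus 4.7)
The plan is to trace through Neukirch's construction of $\psi$ in Lemma \ref{lemma6 neu} and isolate a single cohomological modification $x\in\textrm{H}^{1}(k,A)$ whose ramification governs all ramification of $\psi$ outside $S$; the desired bound then follows at once from Claim \ref{The-element x_p} applied to $x$. The starting observation is that, since $A$ is an elementary abelian normal subgroup of $G$ with $G/A\cong\Gamma$, standard embedding-problem theory realizes the fiber of homomorphic lifts of $\varphi:G_{k}\to\Gamma$ to $G$, once this fiber is nonempty, as a torsor under $\textrm{H}^{1}(k,A)$: any two lifts differ by a class in $\textrm{H}^{1}(k,A)$, and the same statement holds locally at each prime $\mathfrak{p}$.

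In Neukirch's proof of Lemma \ref{lemma6 neu} one first builds a preliminary lift $\psi_{0}$ of $\varphi$ that is unramified outside $S$ (adjoining to $S$, if necessary, those primes ramified in $K\mid k$, as is the convention in the ambient induction that the outer application imposes on $S$). One then adjusts $\psi_{0}$ by an element $x\in\textrm{H}^{1}(k,A)$ in order to (i) match the prescribed local data $\psi_{\mathfrak{p}}$ at primes of $S$, the local correction at $\mathfrak{p}\in S$ being the difference of $\psi_{\mathfrak{p}}$ and $\psi_{0,\mathfrak{p}}$ in the torsor structure, (ii) satisfy the local embedding condition $\mathcal{HOM}_{G}(G_{k_{\mathfrak{p}}},E)\neq\emptyset$ demanded for unramified primes outside $S$, and (iii) ensure surjectivity of $\psi=\psi_{0}\cdot x$. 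The element $x$ is precisely the output of Theorem \ref{theorem1.neu} applied to the set $S$ together with the local data just described.

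For a prime $\mathfrak{p}\notin S$, $\psi_{0}$ is unramified at $\mathfrak{p}$, so $\psi$ is ramified at $\mathfrak{p}$ if and only if the local class $x_{\mathfrak{p}}$ is ramified: the inertia at $\mathfrak{p}$ under $\psi$ lies in $A$ and is detected exactly by the restriction of $x$ to the local inertia subgroup. Consequently the set of primes outside $S$ at which $\psi$ ramifies coincides with the set of primes outside $S$ at which $x$ ramifies, and by Claim \ref{The-element x_p} this set has cardinality at most $3\cdot\textrm{rank}(A)$. The main obstacle is the bookkeeping step: one must verify that the preliminary lift $\psi_{0}$ genuinely introduces no ramification outside $S$ and that none of the auxiliary adjustments in Neukirch's construction (for the $E$-condition and for surjectivity) require independent elements of $\textrm{H}^{1}(k,A)$ with uncontrolled ramification --- that is, that everything is packaged into the single $x$ supplied by Theorem \ref{theorem1.neu}. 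Once this verification is made, the bound transfers directly from Claim \ref{The-element x_p}.
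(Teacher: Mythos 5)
Your overall strategy --- realize $\psi$ as a twist $[\psi_{0}]^{x}$ of a preliminary lift by a class $x\in\textrm{H}^{1}(k,A)$ produced by Theorem \ref{theorem1.neu}, then invoke Claim \ref{The-element x_p} --- is the same as the paper's. But the step you defer as ``bookkeeping'' is in fact the entire content of the argument, and the way you propose to dispose of it does not work. The preliminary lift $[\psi_{0}]\in\mathcal{HOM}{}_{\Gamma}(G_{k},G)$ that the construction actually provides (via Neukirch's Lemma 4, from nonemptiness of the product of local lifting sets) is \emph{not} unramified outside $S$; it is merely some global lift, ramified at finitely many primes $\mathfrak{p}_{1},\dots,\mathfrak{p}_{r}$ outside $S$. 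Your parenthetical fix --- enlarge $S$ by the primes ramified in $K\mid k$ --- does not capture these, because the $\mathfrak{p}_{j}$ are precisely primes that are \emph{unramified} in $K\mid k$ (which is why they can be repaired at all). Consequently your asserted equivalence ``$\psi$ ramified at $\mathfrak{p}\notin S$ if and only if $x_{\mathfrak{p}}$ ramified'' fails as stated: at the $\mathfrak{p}_{j}$ the lift $\psi_{0}$ is already ramified, and $x_{\mathfrak{p}_{j}}$ being unramified would not save you.

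The paper's resolution is to set $S^{*}=S\cup\{\mathfrak{p}_{1},\dots,\mathfrak{p}_{r}\}$. At each $\mathfrak{p}\in S^{*}-S$ the local homomorphism $\varphi_{\mathfrak{p}}$ is unramified, hence lifts to an unramified $\tilde{\psi}_{\mathfrak{p}}$, and one prescribes $y_{\mathfrak{p}}$ to be the class carrying $[(\psi_{0})_{\mathfrak{p}}]$ to $[\tilde{\psi}_{\mathfrak{p}}]$. Theorem \ref{theorem1.neu} is then applied with $S^{*}$ (not $S$) as the prescribed set, so the resulting $[\psi]=[\psi_{0}]^{x}$ is unramified at every $\mathfrak{p}\in S^{*}-S$ by construction, while for $\mathfrak{p}\notin S^{*}$ ramification of $\psi_{\mathfrak{p}}$ forces ramification of $x_{\mathfrak{p}}$, which Claim \ref{The-element x_p} bounds by $3\cdot rank(A)$ primes. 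Your argument needs this enlargement of $S$ to $S^{*}$ together with the explicit unramified local prescription at the new primes; without it the count of ramified primes outside $S$ is not controlled.
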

\begin{proof}
By assumption $\prod_{\mathfrak{p}}\mathcal{HOM}{}_{\Gamma}(G_{k_{p}},G)\neq\phi$
then by\textit{ }\cite[Lemma 4]{key-12}, the set $\mathcal{HOM}{}_{\Gamma}(G_{k},G)\neq\phi$
so we can start with $[\psi_{0}]\in\mathcal{HOM}{}_{\Gamma}(G_{k},G)$.
Denote by $N_{0}\mid k$ the field defined by $\psi_{0}$ and let
$\Omega=N_{0}(\zeta_{n}),$ where $\zeta_{n}$ denotes a primitive
$n$-th root of unity. Let $\mathfrak{p}_{1},...,\mathfrak{p}_{r}$
be primes outside of $S$ for which $\psi_{0}$ is ramified, and denote
$S^{*}=S\cup\{\mathfrak{p}_{1},...,\mathfrak{p}_{r}\}$. Thus, the
primes $\mathfrak{p}\in S^{*}-S$ are unramified in $K\mid k$, namely
the homomorphism $\varphi_{\mathfrak{p}}$ is unramified, and therefore
can be lifted\footnote{\cite[Lemma 5]{key-12}} to an unramified $\Gamma-$homomorphism
$\tilde{\psi}_{\mathfrak{p}}$. For each $\mathfrak{p}\in S^{*}$
let $y_{\mathfrak{p}}\in\textrm{H}^{1}(G_{k_{p}},\mathbb{Z}/p)$ be
the cohomology class which sends $[(\psi_{0})_{\mathfrak{p}}]$ into
$[\tilde{\psi}_{\mathfrak{p}}]$, namely:
\begin{eqnarray*}
[(\psi_{0})_{\mathfrak{p}}]^{y_{\mathfrak{p}}} & = & [\psi_{\mathfrak{p}}]\\
\end{eqnarray*}
where $\tilde{\psi}_{\mathfrak{p}}$ are the elements which are given
in advance. We now apply Theorem \ref{theorem1.neu} and obtain an
element $x\in\textrm{H}^{1}(\mathfrak{G},A)$ such that:

1) $x_{\mathfrak{p}}=y_{\mathfrak{p}}$ for $\mathfrak{p}\in S^{*}$.

2) if $\mathfrak{p}\notin S^{*}$ then $x_{\mathfrak{p}}$ is cyclic
and if $x_{\mathfrak{p}}$ is ramified then $\mathfrak{p}$ splits
completely in $\Omega$.

The element $x$ changes the solution of the embedding problem into
a solution which is ramified outside $S$ at no more then $3\cdot rank(A)$
primes. Namely, $[\psi]$ which is given by: 
\begin{eqnarray*}
[\psi] & = & [\psi_{0}]^{x}\\
\end{eqnarray*}
 has the desired property. Indeed, if $\mathfrak{p}\in S^{*}-S$ then
$[\psi_{\mathfrak{p}}]=[\tilde{\psi}_{\mathfrak{p}}]$ which we know
is unramified. Let $\mathfrak{p}\notin S^{*}$ and assume that $[\psi_{\mathfrak{p}}]$
is ramified. However, $[\psi_{\mathfrak{p}}]=[\psi_{0}]^{x_{\mathfrak{p}}}$
and we know that $[(\psi_{o})_{\mathfrak{p}}]$ is unramified outside
$S^{*}$ so we must have that the cohomology class $x_{\mathfrak{p}}$
is ramified. The lemma now follows from claim \ref{The-element x_p}.
\end{proof}
Let us recall that a $chief\ series$ is a maximal normal series of
a group. Namely, a chief series of a group $G$ is a finite collection
of normal subgroups $N_{i}\subseteq G$: 
\begin{eqnarray*}
 & G=N_{0}\supseteqq N_{1}\supseteqq\cdots\supseteqq N_{t}=\{1\}
\end{eqnarray*}
such that each quotient group $N_{i-1}/N_{i}$ is a minimal normal
subgroup of $G/N_{i}$.
\begin{rem}
\label{chief factors}Note that the chief factors of every chief series
are elementary abelian $l$-group, where $l$ is a prime number. Indeed,
for a solvable group $G$, let $\{G_{i}\}_{i=1}^{m}$ denote the chief
series. Then $G_{m-1}$ is a minimal normal subgroup of $G$ and thus
is elementary abelian $l$-group. The remark now follows by induction
and the fact that $\{G_{i}/G_{m-1}\}_{i=1}^{m-1}$ form a chief series
of the solvable group $G/G_{m-1}$. Lets denote by $p_{i}$ the prime
associated to the group $N_{i-1}/N_{i}$. Note that:
\begin{eqnarray*}
\sum_{i=1}^{t}rank(N_{i-1}/N_{i}) & = & \sum_{i=1}^{t}log_{p_{i}}(|N_{i-1}/N_{i}|)\leq\sum_{i=1}^{t}ln(|N_{i-1}/N_{i}|)=ln(\prod_{i=1}^{t}|N_{i-1}/N_{i}|)=ln(|G|)\\
\end{eqnarray*}
\end{rem}
\begin{thm}
\label{thm:main.neukirch}Let $G$ be an odd order group, then:
\begin{eqnarray*}
 & ram^{t}(G)\leq3\cdot ln(|G|)\\
\end{eqnarray*}
\end{thm}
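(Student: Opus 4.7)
The plan is to realize $G$ as a Galois group of a tamely ramified extension of $\mathbb{Q}$ by solving a sequence of embedding problems along a chief series, and to use Lemma \ref{element obt from lemma 6} at each step to control the number of newly ramified primes. Since $G$ has odd order it is solvable (Feit--Thompson), so we may fix a chief series
\[
G=N_0\supseteqq N_1\supseteqq\cdots\supseteqq N_t=\{1\},
\]
whose factors $A_i=N_{i-1}/N_i$ are, by the discussion in Remark \ref{chief factors}, elementary abelian $p_i$-groups with $p_i$ an odd prime, and satisfy $\sum_{i=1}^{t} \mathrm{rank}(A_i)\leq \ln(|G|)$.

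I would then construct inductively a tower $\mathbb{Q}=K_0\subset K_1\subset\cdots\subset K_t$ of Galois extensions of $\mathbb{Q}$ with $\mathrm{Gal}(K_i/\mathbb{Q})\cong G/N_i$. At the $i$-th step one faces the embedding problem
\[
1\longrightarrow A_i\longrightarrow G/N_i\longrightarrow G/N_{i-1}\longrightarrow 1,
\]
to which I would apply Lemma \ref{lemma6 neu} with $\Gamma=G/N_{i-1}$, with $A=A_i$, and with $E$ taken to be a solvable cover provided by the remaining part of the chief series (e.g.\ $E=G/N_t=G$ itself, whose kernel over $G/N_i$ is the solvable group $N_i$). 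Taking $S_i$ to be the finite set of primes of $\mathbb{Q}$ already ramified in $K_{i-1}/\mathbb{Q}$, together with any prescribed local data needed to maintain tameness (namely, for newly ramified primes we can arrange cyclic inertia of order $p_i$ with $\mathfrak{p}\nmid p_i$, as in the Main Lemma's setup). Lemma \ref{element obt from lemma 6} then produces a solution $\psi_i\in\mathcal{HOM}_{\Gamma}(G_{\mathbb{Q}},G/N_i)_{sur}$ whose associated field $K_i$ is ramified outside $S_i$ at no more than $3\cdot \mathrm{rank}(A_i)$ primes. Iterating,
\[
\#\{\text{primes ramified in }K_t/\mathbb{Q}\}\ \leq\ 3\sum_{i=1}^{t}\mathrm{rank}(A_i)\ \leq\ 3\ln(|G|),
\]
with tameness inherited at each step from the cyclic, prime-to-residue-characteristic nature of the new inertia.

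The main obstacles are bookkeeping ones, in checking that the hypotheses of Lemma \ref{lemma6 neu} can be maintained along the induction: (i) the nonemptiness of the restricted product $\prod_{\mathfrak{p}}\mathcal{HOM}_{\Gamma}(G_{\mathbb{Q}_\mathfrak{p}},G/N_i)$, which holds because at each step the local obstructions can be resolved by choosing appropriate local homomorphisms (using that the $p_i$ are odd and that, by induction, the previous stage can be locally extended via an unramified lift at every prime outside the ramification locus, cf.\ \cite[Lemma 5]{key-12}); (ii) the coprimality condition $(n,m(K_{i-1}))=1$, which is preserved across the induction thanks to clause (3) of Lemma \ref{lemma6 neu}; and (iii) the verification that the solutions chosen at stage $i$ are compatible with those at stage $i+1$, which is ensured by prescribing, in the set $S_{i+1}$, the local behaviour of $\psi_i$ at the primes already ramified. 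The key nontrivial input is Lemma \ref{element obt from lemma 6} itself; once its quantitative bound $3\cdot\mathrm{rank}(A_i)$ is in hand, the theorem is essentially a telescoping sum combined with the elementary estimate $\sum_i \mathrm{rank}(A_i)\le \ln(|G|)$ recorded in Remark \ref{chief factors}.
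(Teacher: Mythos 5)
Your proposal follows essentially the same route as the paper: fix a chief series, solve the embedding problems $1\to A_i\to G/N_i\to G/N_{i-1}\to 1$ inductively via Lemma \ref{lemma6 neu} with $S$ equal to the already-ramified primes and $E=G$, invoke Lemma \ref{element obt from lemma 6} to bound the new ramification by $3\cdot\mathrm{rank}(A_i)$ at each step, and telescope using the estimate $\sum_i\mathrm{rank}(A_i)\leq\ln(|G|)$ from Remark \ref{chief factors}. The bookkeeping conditions you flag (nonemptiness of the restricted product, the coprimality $(n,m(K_i))=1$, and local compatibility) are exactly the properties (a)--(d) and (i)--(iii) the paper carries through the induction, so the argument matches.
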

\begin{proof}
Let us take a chief series of $G$: 
\begin{eqnarray*}
 & G=N_{0}\supseteqq N_{1}\supseteqq\cdots\supseteqq N_{t}=\{1\}
\end{eqnarray*}
 and let $G_{i}=G/N_{i}$. The kernel of the natural map $\pi_{i}:G_{i}\rightarrow G_{i-1}$
will be denoted by $A_{i}=N_{i-1}/N_{i}$. From remark \ref{chief factors}
we know that each chief factor $A_{i}$ is elementary abelian.

The following diagram describes our embedding problems:

\begin{eqnarray*}
 &  & G_{k}\\
 & \begin{array}{cc}
\psi_{i} & \iddots\\
\iddots
\end{array} & \begin{array}{cc}
\downarrow & \psi_{i-1}\end{array}\\
1\rightarrow A_{i}\rightarrow G_{i} & \overset{\pi_{i}}{\rightarrow} & \ G_{i-1}\rightarrow1
\end{eqnarray*}

We will review the construction of an element $[\psi_{i}]\in\mathcal{HOM}{}_{\{e\}}(G_{k},G)$
and count the number of ramified primes in the corresponding field.
The construction is by induction on the chief series length, namely,
it is shown that for each $i=1,...,t$ there exists an epimorphism
$\psi_{i}:G_{k}\rightarrow G_{i}$ with the following properties:

(a) $\psi_{i-1}=\pi_{i}\circ\psi_{i},\ i=1,2,...,t.$

(b) $[\psi_{i}\mid G_{k_{p}}]=[\psi_{\mathfrak{p}}]$ in $\mathcal{HOM}{}_{\{e\}}(G_{k_{p}},G_{i})$
for $\mathfrak{p}\in S$.

(c) For the field $K_{i}$, defined by $\psi_{i},$ we have $(|G|,m(K_{i}))=1$.

(d) $\prod_{\mathfrak{p}}\mathcal{HOM}{}_{G_{i}}(G_{k_{p}},G)\neq0$.

These properties are essential in order to keep solving the embedding
problems. For $i=0$ we simply take $\psi_{0}$ to be the trivial
map. Let us assume that the homomorphism $\psi_{i-1}$ is already
constructed. Let $S_{i-1}$ be the set of all primes of $\mathbb{Q}$
which ramify in the field $K_{i-1}$ which corresponds to $\psi_{i-1}$.
In \cite{key-12} it is shown that we can then apply Lemma \ref{lemma6 neu}
to this situation by replacing: $S$ by $S_{i-1}$, $E\rightarrow G$
by $G\rightarrow G_{i}$ and $\varphi$ by $\psi_{i-1}$. We then
obtain an element $\psi_{i}$, which is a representative of $[\psi_{i}]\in\mathcal{HOM}{}_{G_{i-1}}(G_{k},G_{i})_{sur}$,
with the following properties:

(i) $[\psi_{i}\mid G_{k_{p}}]=[(\psi_{i})_{\mathfrak{p}}]$ in $\mathcal{HOM}{}_{G_{i-1}}(G_{k_{p}},G_{i})$
for $\mathfrak{p}\in S_{i-1}$.

(ii) for $\mathfrak{p}\notin S_{i-1}$ we have $\mathcal{HOM}{}_{G_{i}}(G_{k_{p}},G)\neq0.$

(iii) For the field $K_{i}$, defined by $\psi_{i},$ we have $(|G|,m(K_{i}))=1$.

It is shown that $\psi_{i}$ satisfy (a)-(d). From Lemma \ref{element obt from lemma 6}
we deduce that the element $\psi_{i}$ is ramified outside of $S_{i-1}$
at no more then $3\cdot rank(A_{i})$ primes. Since $S_{i-1}$ is
defined to be the set of ramified primes in $K_{i-1}$ we deduce that
$K_{i}$ has at most $3\cdot rank(A_{i})$ more ramified primes over
$\mathbb{Q}$ than $K_{i-1}$. 

We deduce that the number of ramified primes in the desired field
is at most the sum of the ``new'' ramified primes in each step of
the induction, $\sum_{i=1}^{t}3\cdot rank(A_{i})$. According to Remark
\ref{chief factors} we have:
\begin{eqnarray*}
\sum_{i=1}^{t}rank(A_{i}) & \leq & ln(|G|)\\
\end{eqnarray*}

and thus:
\begin{eqnarray*}
 & ram^{t}(G)\leq\sum_{i=1}^{t}3\cdot rank(A_{i})\leq3\cdot ln(|G|)\\
\end{eqnarray*}
\end{proof}
$ $\newpage

\end{document}